\newcommand{\uwlim}{\qopname\relax m{uwlim}}
\definecolor{myurlcolor}{rgb}{0,0,0.4}
\definecolor{mycitecolor}{rgb}{0,0.5,0}
\definecolor{myrefcolor}{rgb}{0.5,0,0}
\renewcommand{\H}{\mathcal{H}}	
\newcommand{\B}[1]{\mathcal{B}(#1)}	
\newcommand{\beq}{\begin{equation}}
\newcommand{\eeq}{\end{equation}}
\newcommand{\N}{\mathbb{N}}
\newcommand{\C}{\mathbb{C}}
\newcommand{\op}{\mathrm{op}}
\newcommand{\eps}{\varepsilon}
\newcommand{\cat}[1]{\mathsf{#1}}		
\newcommand{\Ban}{\mathsf{Ban}}			
\newcommand{\Hilb}{\mathsf{Hilb}} 		
\newcommand{\Rep}[1]{\mathsf{Rep}(#1)}		
\newcommand{\NRep}[1]{\mathsf{nRep}(#1)}	
\newcommand{\id}[1]{1_{#1}}			
\theoremstyle{plain}
\newtheorem{thm}{Theorem}[section]
\newtheorem{lem}[thm]{Lemma}
\newtheorem{prop}[thm]{Proposition}
\newtheorem{cor}[thm]{Corollary}
\newtheorem{defn}[thm]{Definition}
\theoremstyle{remark}
\newtheorem{ex}[thm]{Example}
\newtheorem{rem}[thm]{Remark}
\numberwithin{equation}{section}
\tikzstyle{none}=[inner sep=0pt]
\tikzstyle{simple}=[-,draw=black,line width=1.000]
\tikzstyle{bn}=[circle,inner sep=2pt,fill=black,draw=black,line width=0.8 pt]
\begin{document}
\sloppy

\setlength{\jot}{6pt}



\title[Infinite direct sums in C$^*$-categories and W$^*$-categories]{The universal property of infinite direct sums in C$^*$-categories and W$^*$-categories}

\author{Tobias Fritz}
\author{Bas Westerbaan}

\address{Perimeter Institute for Theoeretical Physics, Waterloo, Ontario, Canada}
\email{tfritz@pitp.ca}

\address{Institute for Computing and Information Sciences, Radboud Universiteit Nijmegen, The Netherlands}
\email{bas@westerbaan.name}

\keywords{}

\subjclass[2010]{Primary: 46M15; Secondary: 18E05, 46L10}

\thanks{\textit{Acknowledgements.} We thank Robert Furber for copious
help and discussion, as well as Chris Heunen and Martti Karvonen
for discussion. Part of this work was conducted while the first
author was at the Max Planck Institute for Mathematics in the
Sciences in Leipzig, Germany.}

\begin{abstract}
When formulating universal properties for objects in a dagger category, one usually expects a universal property to characterize the universal object up to unique \emph{unitary} isomorphism. We observe that this is automatically the case in the important special case of C$^*$-categories, provided that one uses enrichment in Banach spaces. We then formulate such a universal property for infinite direct sums in C$^*$-categories, and prove the equivalence with the existing definition due to Ghez, Lima and Roberts in the case of W$^*$-categories. These infinite direct sums specialize to the usual ones in the category of Hilbert spaces, and more generally in any W$^*$-category of normal representations of a W$^*$-algebra.

Finding a universal property for the more general case of direct integrals remains an open problem.
\end{abstract}

\maketitle

\section{Introduction}

Among the most central and useful concepts of category theory is the notion of universal property. Its utility relates to the fact that a universal property characterizes an object up to unique isomorphism, and to the vast number of mathematical construction which can be defined or characterized in terms of universal properties. Among these is the concept of \emph{direct sum} of two objects $A_1$ and $A_2$ in an additive category $\cat{C}$. It can be defined in several equivalent ways via the following standard result, where $\cat{Ab}$ is the category of abelian groups.

\begin{thm}[{e.g.~\cite[Proposition~1.2.4]{borceux}}]
\label{directsum_equiv_trad}
Let $\cat{C}$ be an additive category and $A_1,A_2\in\cat{C}$ two objects. Then the following pieces of structure are equivalent for an object $A\in\cat{C}$:
\begin{enumerate}
	\item\label{coprod} $A$ represents the functor
		\[
			\cat{C} \longrightarrow \cat{Ab}, \qquad B \longmapsto \cat{C}(A_1, B) \times \cat{C}(A_2, B)
		\]
    \item\label{prod} $A$ represents the functor
		\[
			\cat{C}^\op \longrightarrow \cat{Ab}, \qquad B \longmapsto \cat{C}(B, A_1) \times \cat{C}(B, A_2)
		\]
	\item\label{biprod} There are morphisms $p_i : A \to A_i$ and $\kappa_i : A_i \to A$ satisfying the \emph{biproduct equations}
	\[
		p_i \kappa_j = \delta_{ij}, \qquad \kappa_1 p_1 + \kappa_2 p_2 = 1.
	\]
\end{enumerate}
\end{thm}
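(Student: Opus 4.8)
The plan is to prove the equivalence by establishing \ref{coprod}~$\Leftrightarrow$~\ref{biprod} directly and then deducing \ref{prod}~$\Leftrightarrow$~\ref{biprod} by duality. The observation that makes the second step essentially free is that condition \ref{biprod} is manifestly self-dual: passing to $\cat{C}^\op$ and interchanging the roles of $p_i$ and $\kappa_i$ carries the biproduct equations to themselves, whereas condition \ref{prod} for $\cat{C}$ is literally condition \ref{coprod} for $\cat{C}^\op$. Thus, once \ref{coprod}~$\Leftrightarrow$~\ref{biprod} is in hand, applying it verbatim in $\cat{C}^\op$ yields \ref{prod}~$\Leftrightarrow$~\ref{biprod} with no further argument.

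For \ref{coprod}~$\Rightarrow$~\ref{biprod}, I would begin from the natural isomorphism $\cat{C}(A, B) \cong \cat{C}(A_1, B) \times \cat{C}(A_2, B)$ and extract its universal element via Yoneda: applying the inverse isomorphism at $B = A$ to $1_A$ produces a pair $\kappa_i : A_i \to A$ with the universal property that every pair $(f_1, f_2)$ with $f_i : A_i \to B$ factors as $f_i = g\kappa_i$ for a unique $g : A \to B$. I would then manufacture the projections by invoking this property twice: the pair $(1_{A_1}, 0)$ induces $p_1 : A \to A_1$ and the pair $(0, 1_{A_2})$ induces $p_2 : A \to A_2$, and these satisfy $p_i \kappa_j = \delta_{ij}$ by construction. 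The remaining identity $\kappa_1 p_1 + \kappa_2 p_2 = 1$ then follows from the \emph{uniqueness} half of the universal property: using $p_i \kappa_j = \delta_{ij}$ and bilinearity of composition, both $\kappa_1 p_1 + \kappa_2 p_2$ and $1_A$ yield the pair $(\kappa_1, \kappa_2)$ upon postcomposition with the $\kappa_i$, hence they coincide.

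For the converse \ref{biprod}~$\Rightarrow$~\ref{coprod}, I would exhibit the representing isomorphism explicitly, sending $g : A \to B$ to $(g\kappa_1, g\kappa_2)$; this is a homomorphism of abelian groups since composition is additive in each argument, and $(f_1, f_2) \mapsto f_1 p_1 + f_2 p_2$ is a two-sided inverse, one direction using $p_i \kappa_j = \delta_{ij}$ and the other using $\kappa_1 p_1 + \kappa_2 p_2 = 1$. Naturality in $B$ is immediate from associativity of composition.

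There is no real obstacle here, this being the standard biproduct argument, but the one point deserving care is the use of the additive structure. The functors in \ref{coprod} and \ref{prod} are required to land in $\cat{Ab}$, so the representations are isomorphisms of abelian-group-valued functors; it is precisely this enrichment that makes composition bilinear and thereby legitimizes the expressions $f_1 p_1 + f_2 p_2$ and $\kappa_1 p_1 + \kappa_2 p_2$. Keeping track of where additivity is invoked, as opposed to a merely set-level universal property, is the only subtlety to watch.
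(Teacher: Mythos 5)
Your proposal is correct and is the standard biproduct argument; the paper itself gives no proof of this statement, simply citing it as Proposition~1.2.4 of Borceux, and your argument (extract $\kappa_i$ via Yoneda, build $p_i$ from the pairs $(1,0)$ and $(0,1)$, get $\kappa_1 p_1+\kappa_2 p_2=1$ from uniqueness, and dispatch \ref{prod} by the self-duality of \ref{biprod}) is exactly the proof found there. The one point you flag --- that the additive enrichment is what legitimizes the sums and makes the representing bijection a group isomorphism --- is indeed the only place where care is needed, and you handle it correctly.
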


Here, \ref{coprod} and~\ref{prod} state that $A$ is a coproduct and product of $A_1$ and $A_2$, respectively. \ref{biprod} is a purely equational characterization which also proves that direct sums are preserved by every additive functor.

There appear many useful universal properties for purely
algebraic structures.  Treating analytical structures
categorically is just as straightforward in some cases, but very challenging
in many other cases. It usually involves equipping the categories
under consideration themselves with certain analytical structures.
An interesting case of where this happens is the theory of
\emph{W$^*$-categories}, which model categories of Hilbert space
representations of C$^*$-algebras or W$^*$-algebras. The most
paradigmatic example of a W$^*$-category being $\Hilb$, the category
of Hilbert spaces with bounded linear maps.

In this paper, we study \emph{infinite} direct sums of objects in W$^*$-categories and in the more general case of C$^*$-categories. While our definitions and results also apply to the case of finite direct sums, they reduce to the standard Theorem~\ref{directsum_equiv_trad} in this case. As the most basic example, the Hilbert space $\ell^2(I)$ for a (finite or) infinite set $I$ is the direct sum of $I$ many copies of $\C$. Part of the \emph{raison d'\^{e}tre} for analytic structure on an algebraic object is that it often makes the objects under consideration more well-behaved. This is indeed what happens in our case as well: while Theorem~\ref{directsum_equiv_trad} does not generalize to infinite direct sums, since infinite coproducts and infinite products do no longer coincide in a general additive category (even when they exist), our main result (Theorem~\ref{directsum_equiv}) provides such a generalization for infinite direct sums in W$^*$-categories.

\subsection*{Summary} After giving some background on C$^*$-categories and W$^*$-categories in Section~\ref{background}, we start by investigating universal properties for objects in C$^*$-categories in Section~\ref{univ_props}. We find that defining universal properties in terms of representability of a functor enriched in Banach spaces provides a good notion of universal property in a C$^*$-category, since it guarantees uniqueness up to unique \emph{unitary} isomorphism of the representing object. We then introduce our definition of the particular universal property of infinite direct sums in C$^*$-categories in Section~\ref{sec_directsum_def}. In Section~\ref{wstarcatcase}, we specialize to the case of W$^*$-categories, and prove an analog of Theorem~\ref{directsum_equiv_trad} in this setting, showing that our infinite direct sums are equivalent to the direct sums in W$^*$-categories as introduced originally by Ghez, Lima and Roberts~\cite{wstarcat}. 

\subsection*{Direct integrals} We have also attempted to formulate a similar universal property for \emph{direct integrals} in W$^*$-categories, but we have not been able to make this idea work. As has been argued by Simon Henry\footnote{See \href{https://mathoverflow.net/questions/313158/what-does-it-mean-for-a-category-to-admit-direct-integrals}{mathoverflow.net/questions/313158/what-does-it-mean-for-a-category-to-admit-direct-integrals}.}, finding such a universal property requires the consideration of additional structure on the W$^*$-category. Our current best bet is that doing so will involve a notion of \emph{W$^*$-category with rigged objects}, generalizing rigged Hilbert spaces~\cite{vol4}.

\subsection*{C*-algebras and W*-algebras}  We give a brief recap of C*-algebras and W*-algebras for the benefit of readers who have a background in category theory but not in operator algebras, while referring to the literature for the details~\cite{sakai,kr,bram}.
 
\emph{C$^*$-algebras} are abstract versions of algebras of operators
on a Hilbert space which are closed in the operator norm and closed
under taking adjoint operators, where taking adjoints is encoded
in the ``star'' operation $*$. Similarly, \emph{W$^*$-algebras} are
abstract versions of algebras of operators on a Hilbert space which
are in addition closed under limits with respect to the strong
operator topology, a sort of pointwise converge.\footnote{A W$^*$-algebra
    of operators is usually called a von Neumann algebra.
    However, our references \cite{bas,bram} use the terms W$^*$-algebra and
        von Neumann algebra interchangeably.}
In fact, every abstract C$^*$-algebra or W$^*$-algebra is isomorphic
to one having such a concrete representation~\cite[\S30\textsubscript{XIV}, \S48\textsubscript{VII}]{bram}.
For example, every $*$-closed subalgebra
of the algebra of complex~$n\times n$-matrices is a C$^*$-algebra
and a~W$^*$-algebra. Moreover, every commutative unital C$^*$-algebra
is isomorphic to~$C(X)$ for some compact Hausdorff space~$X$~\cite[\S27]{bram}. And
every commutative W$^*$-algebra is isomorphic to~$L^\infty(X,\mu)$
for some complete finite measure space~$(X,\mu)$~\cite[\S53\textsubscript{XI}]{bram}.

\section{C$^*$-categories and W$^*$-categories}
\label{background}

A category can be thought of as a many-object version of a monoid.
This manifests itself in the fact that for any category $\cat{C}$,
every endomorphism homset $\cat{C}(A,A)$ is a monoid. In the same
way, a C$^*$-category is a many-object version of a unital
C$^*$-algebra. Indeed in a~C$^*$-category~$\cat{C}$, every
homset~$\cat{C}(A,A)$ will be a unital~C$^*$-algebra. Similarly, a
W$^*$-category $\cat{C}$ is a many-object version of a W$^*$-algebra,
and in a W$^*$-category every homset~$\cat{C}(A,A)$ carries the structure of
a~W$^*$-algebra.

    The following precise definitions are due to Ghez, Lima and Roberts \cite{wstarcat}.

\begin{defn}
    Let~$\cat{C}$ be a category together with an endofunctor~$*\,\colon \cat{C} \to \cat{C}^\op$,
    whose action on arrows we denote by~$a\mapsto a^*$. The pair $(\cat{C},*)$ is called a \emph{complex~$*$-category} if the following conditions hold.
\begin{enumerate}
    \item $\cat{C}$ is enriched over complex vector spaces, meaning that:
	    \begin{enumerate}
		    \item every homset~$\cat{C}(A,B)$ is a complex vector space,
		    \item composition of morphisms is bilinear.
	   \end{enumerate}
    \item The contravariant endofunctor~$*$ is
	    \begin{enumerate}
		    \item identity-on-objects: $a^* : B \to A$ for $a : A \to B$,
		    \item involutive: $a^{**} = a$,
		    \item conjugate linear:
			    \[
				    (\lambda a)^* \ =\  \overline{\lambda} a^*,\qquad (a+b)^* \ =\  a^* + b^*
			    \]
			    for morphisms $a, b : A \to B$ and $\lambda\in\C$.
	    \end{enumerate}
    \item For every morphism~$a : A \to B$,
	    \begin{enumerate}
		    \item there is a morphism $b :  A \to A$ with~$a^*a=b^*b$,
		    \item $a^*a=0$ implies $a = 0$.
	    \end{enumerate}
\end{enumerate}
\end{defn}

It is easy to see that in a complex $*$-category $\cat{C}$, every endomorphism homset $\cat{C}(A,A)$ carries the structure of a complex $*$-algebra. 

\begin{defn}
A complex $*$-category $(\cat{C},*)$ is called a
\begin{enumerate}
	\item \emph{normed $*$-category} if each homset~$\cat{C}(A,B)$ is a normed space such that
	    \[
		    \| ab \| \ \leq\  \|a\| \, \|b\|
	    \]
                for any~$a\colon B \to C$ and~$b\colon A \to B$.
	\item\label{Cstarcat}
    \emph{C$^*$-category} if moreover every homset is norm-complete, and the C$^*$-identity
    \[
	\|a \|^2\  =\  \|a^*a\|
     \]
	holds for every morphism $a$.
        (This turns $\cat{C}(A,A)$ into a~C$^*$-algebra.)
\item\label{Wstarcat}
    \emph{W$^*$-category} if moreover
        where every homset has a Banach space predual.
        
	Equivalently~\cite[Prop.~2.15]{wstarcat}, a C$^*$-category~$\cat{C}$
            is a~W$^*$-category
            if every~$\cat{C}(A,A)$ is a W$^*$-algebra
            and~$\cat{C}(A,B)$ is self dual
            as a Hilbert~$\cat{C}(A,A)$-module
            with inner product~$\langle a,b\rangle \equiv a^*b$.
\end{enumerate}
As the homset~$\cat{C}(A,A)$
    in a C$^*$-category
    is a C$^*$-algebra,
    we will freely use definitions from~C$^*$-algebras on endomorphisms,
    so we say e.g.~that~$a\colon A \to A$ is a \emph{projection} if it is a projection as an element of the~C$^*$-algebra $\cat{C}(A,A)$.

Several notions extend naturally to all morphisms:
    a morphism~$a \colon A \to B$ is called an \emph{isometry} if~$a^*a=1$
        (and in that case~$aa^*$ is a projection)
        and it is a \emph{unitary} if additionally~$aa^*=1$.
\end{defn}

We again refer to~\cite{wstarcat} for more details on the following examples.

\begin{ex}
The category of Hilbert spaces with bounded linear maps between them
    is a W$^*$-category in the obvious way, meaning that $*$ takes each bounded linear map to its adjoint and the norm is the operator norm.
\end{ex}

\begin{ex}
\label{rep_ex}
If $A$ is a C$^*$-algebra, then the category of non-degenerate
representations $\pi : A \to \B{\H}$ on Hilbert spaces $\H$ is a
W$^*$-category. The morphisms~$(\H_1,\pi_1) \to (\H_2,\pi_2)$ are
exactly the \emph{intertwiners}, i.e.~the bounded linear maps~$f :
\H_1 \to \H_2$ such that the diagram
\[\begin{tikzcd}
	\H_1 \ar{r}{f} \ar[swap]{d}{\pi_1(a)} & \H_2 \ar{d}{\pi_2(a)} \\
	\H_1 \ar{r}{f} & \H_2
\end{tikzcd}\]
commutes for all $a\in A$. It is well-known that this is a W$^*$-category~\cite[Example~2.4]{wstarcat}, where the Banach space structure on each hom-set is the usual one on a space of operators using the operator norm, and the involution is given by the usual Hermitian adjoint. We denote this W$^*$-category by $\Rep{A}$. In the most basic case $A = \C$, we get $\Rep{\C} \cong \Hilb$, the category of Hilbert spaces.
\end{ex}

\begin{ex}
\label{nrep_ex}
If $N$ is a W$^*$-algebra, then the category of normal unital representations on Hilbert spaces is a W$^*$-category, with intertwiners as morphisms as above. We denote this W$^*$-category by $\NRep{N}$. For a C$^*$-algebra $A$ and $A^{**}$ its universal enveloping W$^*$-algebra, we have $\NRep{A^{**}} \cong \Rep{A}$, since non-degenerate representations of $A$ extend uniquely to normal unital representations of $A^{**}$.
\end{ex}

\section{Universal properties in C$^*$-categories}
\label{univ_props}

In this section, we write $\cat{C}$ for a C$^*$-category.
We start with a generalization of~\cite[Lemma 7]{westerbaan2016universal}
    to C$^*$-categories before discussing universal properties.

\begin{lem}
	Let $A,B\in\cat{C}$. Then for any~$a\colon A \to B$ with~$\|a\| \leq 1$
    and projections~$s \colon A \to A$,
        $t\colon B\to B$,
	the following are equivalent:
\begin{enumerate}
\item $a^* t a \leq 1-s$,
\item $a s a^* \leq 1 - t$,
\item $tas = 0$,
\item $sa^* t = 0$.
\end{enumerate}
\label{contrapositionlemma}
\end{lem}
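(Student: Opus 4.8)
The plan is to collapse all four conditions onto a single vanishing statement, namely $s a^{*} t a s = 0$, and to exploit both the built-in symmetry of the lemma and the nondegeneracy axiom $x^{*}x = 0 \Rightarrow x = 0$. First observe that the equivalence of (c) and (d) is immediate: applying the involution to $tas = 0$ and using that $s$ and $t$ are self-adjoint gives $(tas)^{*} = s a^{*} t = 0$, and conversely. Moreover, the four conditions are invariant under the symmetry $A \leftrightarrow B$, $a \leftrightarrow a^{*}$, $s \leftrightarrow t$, which swaps (a) with (b) and (c) with (d). It therefore suffices to prove (a) $\Leftrightarrow$ (c); the equivalence (b) $\Leftrightarrow$ (d) then follows by symmetry, and (c) $\Leftrightarrow$ (d) links the two pairs.

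The crux is a standard fact about the C$^{*}$-algebra $\cat{C}(A,A)$: for a positive element $p$ with $\|p\| \leq 1$ and a projection $r$, one has $p \leq r$ if and only if $(1-r)\, p\, (1-r) = 0$. The forward direction needs no norm bound, since $(1-r)p(1-r) \leq (1-r) r (1-r) = 0$ forces the positive element $(1-r)p(1-r)$ to vanish. For the converse I would write $p = (p^{1/2})^{*} p^{1/2}$ using the functional calculus in $\cat{C}(A,A)$; then $(1-r)p(1-r) = 0$ says that $x := p^{1/2}(1-r)$ satisfies $x^{*}x = 0$, hence $x = 0$, so $p(1-r) = 0$ and by self-adjointness $p = rpr \leq \|p\|\, r \leq r$. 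Applying this with $p = a^{*} t a$ and $r = 1 - s$ gives (a) $\Leftrightarrow s a^{*} t a s = 0$. Here $p = a^{*}ta = (ta)^{*}(ta)$ is positive and satisfies $\|p\| = \|ta\|^{2} \leq \|t\|^{2}\|a\|^{2} \leq 1$, which is exactly where the hypothesis $\|a\| \leq 1$ enters.

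It remains to identify $s a^{*} t a s$ with (c). Using $t^{2} = t$ and self-adjointness, $s a^{*} t a s = (tas)^{*}(tas)$, so the nondegeneracy axiom applied to the morphism $tas \colon A \to B$ shows $s a^{*} t a s = 0$ if and only if $tas = 0$. Combining, (a) $\Leftrightarrow s a^{*} t a s = 0 \Leftrightarrow$ (c), which closes the chain.

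The main obstacle is the converse half of the projection comparison lemma, since this is the only place that genuinely uses the analytic structure: it requires a square root inside the C$^{*}$-algebra $\cat{C}(A,A)$ together with the implication $x^{*}x = 0 \Rightarrow x = 0$, and it is where the normalization $\|a\| \leq 1$ is indispensable (without it one only obtains $a^{*}ta \leq \|a\|^{2}(1-s)$). The remaining points are routine bookkeeping: checking that the composites $ta$, $tas$, $sa^{*}t$ have the correct domains and codomains, and that all manipulations take place inside the endomorphism C$^{*}$-algebras $\cat{C}(A,A)$ and $\cat{C}(B,B)$ where the order relation $\leq$ is defined.
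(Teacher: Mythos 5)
Your proposal is correct and follows essentially the same route as the paper: prove (a)$\Leftrightarrow$(c), obtain (b)$\Leftrightarrow$(d) by the symmetry $a\leftrightarrow a^*$, $s\leftrightarrow t$, and link the pairs via $(tas)^*=sa^*t$. The only cosmetic difference is that for (c)$\Rightarrow$(a) you detour through the square root $p^{1/2}$ of $p=a^*ta$, whereas the paper gets $a^*ta=(1-s)a^*ta(1-s)\leq 1-s$ directly from $ta=ta(1-s)$; both are valid and use the norm bound in the same place.
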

\begin{proof}
We will first prove~$a^*t a \leq 1-s \:\Leftrightarrow\: tas=0$.
    Assume~$a^* t a \leq 1-s$.
    Then~$0 \leq s a^* t a s \leq s (1-s) s = 0$.
    Thus~$s a^* t a s = 0$, which is to say~$tas = 0$.
    Now, assume~$tas = 0$.
    Then~$ta = ta(1-s)$ and~$a^*t = (1-s)a^* t$.
    Hence~$a^* ta = (1-s)a^*ta(1-s) \leq 1-s$, as desired.
A similar argument proves~$as a^* \leq 1-t \Leftrightarrow sa^*t=0$.
To finish the proof, it is sufficient to
        show~$tas=0 \Leftrightarrow sa^*t=0$, which follows directly
        by applying~$(\ )^*$.
\end{proof}

\begin{lem}
Let~$a \colon A \leftrightarrows B \colon b$ be any two morphisms
    such that~$\|a \| \leq 1$, $\| b\| \leq 1$ and~$ab=1$.
    Then~$b$ is an isometry with~$a=b^*$.
\label{isometrylemma}
\end{lem}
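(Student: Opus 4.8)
The plan is to extract from the single relation $ab=1_B$ two operator inequalities that force $b$ to be an isometry and $a$ to be its adjoint, working throughout with the order structure of the C$^*$-algebras $\cat{C}(A,A)$ and $\cat{C}(B,B)$ and invoking the norm bounds $\|a\|\le 1$ and $\|b\|\le 1$ at exactly two points.

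First I would saturate the contractivity of $b$. Since $\|a\|\le 1$ gives $a^*a\le 1_A$, conjugating by $b$ yields $b^*a^*ab\le b^*b$; but the left-hand side is $(ab)^*(ab)=1_B$, while $\|b\|\le 1$ gives $b^*b\le 1_B$, so sandwiching forces $b^*b=1_B$. Thus $b$ is an isometry and $p:=bb^*$ is a projection in $\cat{C}(A,A)$.

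Next I would saturate the contractivity of $a$. Using $ab=1_B$ once more, $apa^*=(ab)(ab)^*=1_B$, whereas $\|a\|\le 1$ gives $aa^*\le 1_B$. Writing $q:=1_A-p$ and subtracting, $aqa^*=aa^*-apa^*\le 1_B-1_B=0$; since $aqa^*=(aq)(aq)^*\ge 0$, this forces $aq=0$. Then $a=a(p+q)=ap=(ab)b^*=b^*$, which together with $b^*b=1_B$ is precisely the assertion.

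The only genuinely delicate step is the second one, and it is exactly where the hypothesis $\|a\|\le 1$ becomes indispensable: the equation $ab=1_B$ by itself says nothing about the behaviour of $a$ on the complement $q$ of the range of $b$, and it is the interplay of the saturated piece $apa^*=1_B$ with the contraction bound $aa^*\le 1_B$ that pins $a$ down there; everything else is routine manipulation of positive elements. (Once $aqa^*\le 0$ is in hand, one may alternatively read off $aq=0$ from Lemma~\ref{contrapositionlemma} applied to $a$ with $s=q$ and $t=1_B$, for which condition~(b) is exactly $aqa^*\le 0$, rather than appealing to the C$^*$-identity.)
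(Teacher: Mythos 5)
Your proof is correct. The first half---sandwiching $1_B=b^*a^*ab\le b^*b\le 1_B$ to force $b^*b=1_B$---is exactly the paper's argument. In the second half you take a slightly different route: the paper first establishes $aa^*=1_B$ by the symmetric sandwich, then invokes Lemma~\ref{contrapositionlemma} twice to obtain $a^*a\le bb^*$ and $bb^*\le a^*a$, hence $a^*a=bb^*$, and finishes with $a=aa^*a=abb^*=b^*$. You instead bound $a(1_A-bb^*)a^*=aa^*-1_B\le 0$ directly from the contraction estimate $aa^*\le 1_B$, read off $a(1_A-bb^*)=0$ from positivity (via $xx^*=0\Rightarrow x=0$, which is one of the axioms plus the C$^*$-identity), and conclude $a=abb^*=b^*$ in one line. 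This is marginally more economical: it bypasses Lemma~\ref{contrapositionlemma} entirely and never needs $aa^*=1_B$ or $a^*a=bb^*$ as intermediate identities (both drop out afterwards anyway). The paper's version instead exercises the contraposition lemma it has just proved; your closing parenthetical correctly identifies your positivity step as the special case $t=1_B$, $s=1_A-bb^*$ of that lemma, so the two arguments are ultimately the same computation packaged differently.
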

\begin{proof}
    To start, note~$\|a ^*a\| = \|a\|^2 \leq 1$, so~$a^*a \leq 1$.
    Similarly~$b^*b \leq 1$.
    Combined~$1 = b^*a^*ab \leq b^*b \leq 1$, hence~$b^*b=1$
        and so~$b$ is an isometry.
    Similarly~$aa^*=1$.
    Now~$a (1-bb^*) a^* = aa^* - abb^*a^* = 1-1=0 \leq 0$.
    So~$a^* (1-0) a \leq bb^*$ by Lemma~\ref{contrapositionlemma}.
    Similarly~$bb^* \leq a^*a$. Hence~$bb^*=a^*a$.
    We are done: $a = aa^*a=abb^*=b^*$.
\end{proof}

We will need the full power of this observation only in the next section. However, the following special case is generally useful when working with universal properties in C$^*$-categories:

\begin{cor}
Let $u : A \to B$ be an invertible morphism with $\| u \| \leq 1$ and $\| u^{-1} \| \leq 1$. Then $u$ is unitary.
\label{unitaries}
\end{cor}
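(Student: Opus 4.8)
The plan is to obtain the result as an immediate consequence of Lemma~\ref{isometrylemma}, applied to the mutually inverse pair $u$ and $u^{-1}$. The hypotheses of the corollary are precisely tailored to feed into that lemma: we are handed two morphisms going in opposite directions, both of norm at most $1$, whose composite is an identity, which is exactly the input Lemma~\ref{isometrylemma} requires.

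Concretely, I would set $a := u \colon A \to B$ and $b := u^{-1} \colon B \to A$. By assumption $\|a\| \le 1$ and $\|b\| \le 1$, and since $u$ is invertible we have $ab = u u^{-1} = \id{B}$. Lemma~\ref{isometrylemma} then applies verbatim and yields that $b = u^{-1}$ is an isometry with $a = b^*$, i.e.~$u = (u^{-1})^*$. Applying the involution to both sides and using that $*$ is involutive gives $u^* = u^{-1}$, which is exactly the statement that $u$ is unitary: indeed $u^* u = u^{-1} u = \id{A}$ and $u u^* = u u^{-1} = \id{B}$.

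There is essentially no real obstacle here, since all the analytic content has already been absorbed into Lemma~\ref{isometrylemma} (which in turn rests on Lemma~\ref{contrapositionlemma}). The only point requiring a little care is bookkeeping of the composition convention, so that one checks $ab = \id{B}$ rather than the wrong identity, and correctly reads off $u = (u^{-1})^*$ from the conclusion $a = b^*$. If one prefers a symmetric presentation, one can instead invoke the lemma twice---once for the pair $(u, u^{-1})$ and once for the pair $(u^{-1}, u)$---to obtain $u^* u = \id{A}$ and $u u^* = \id{B}$ separately; but extracting $u^* = u^{-1}$ from a single application is the most economical route.
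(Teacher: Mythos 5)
Your proof is correct and matches the paper's intended argument: the paper states Corollary~\ref{unitaries} as an immediate special case of Lemma~\ref{isometrylemma} (with no written proof), and applying that lemma to $a := u$, $b := u^{-1}$ with $ab = \id{B}$ to get $u = (u^{-1})^*$, hence $u^* = u^{-1}$, is exactly the intended route.
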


Now let $\Ban$ be the category of Banach spaces with bounded linear maps of norm $\leq 1$. Every C$^*$-category is $\Ban$-enriched. Our notion of universal property in a C$^*$-category is the standard one for enriched categories~\cite[Section~1.10]{kelly}, namely with enrichment in $\Ban$:

\begin{defn}
\label{def_univ}
A functor $F : \cat{C} \to \Ban$ is \emph{representable} if there is an object $A \in \cat{C}$ together with a natural isomorphism $\cat{C}(A,-)\cong F$ that is a componentwise isometric isomorphism of Banach spaces.
\end{defn}

Corollary~\ref{unitaries} and the (weak) enriched Yoneda lemma~\cite[Section~1.9]{kelly} imply:

\begin{cor}
\label{unique_up_to_unitary}
For any representable functor $\cat{C} \to \Ban$, the representing object is unique up to unique \emph{unitary} isomorphism
\end{cor}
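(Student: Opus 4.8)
The plan is to reduce the statement to Corollary~\ref{unitaries} by way of the enriched Yoneda lemma. Suppose that $(A,\alpha)$ and $(A',\alpha')$ both represent $F$, where $\alpha\colon\cat{C}(A,-)\cong F$ and $\alpha'\colon\cat{C}(A',-)\cong F$ are the componentwise isometric isomorphisms provided by Definition~\ref{def_univ}. First I would form the composite natural isomorphism $\Phi := (\alpha')^{-1}\circ\alpha\colon\cat{C}(A,-)\cong\cat{C}(A',-)$. Since a composite of componentwise isometric isomorphisms is again componentwise isometric, and since the inverse of an isometric isomorphism of Banach spaces is itself isometric, both $\Phi$ and $\Phi^{-1}$ are natural isomorphisms all of whose components are isometries, in particular contractions. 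This is the only point at which the isometric clause of Definition~\ref{def_univ} is used, and it is exactly what will let Corollary~\ref{unitaries} apply.

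Next I would invoke the weak enriched Yoneda lemma with enrichment in $\Ban$. Its underlying-set functor is $\Ban(\C,-)$, which sends a Banach space to its closed unit ball, so the lemma identifies $\Ban$-natural transformations $\cat{C}(A,-)\Rightarrow\cat{C}(A',-)$ with morphisms $A'\to A$ of norm $\leq 1$, the correspondence sending such a morphism to precomposition with it. Because $\Phi$ and $\Phi^{-1}$ are componentwise contractions, they are genuine $\Ban$-natural transformations, and Yoneda produces morphisms $u\colon A'\to A$ and $v\colon A\to A'$ with $\|u\|\leq 1$ and $\|v\|\leq 1$, unique with these properties. Functoriality of the Yoneda correspondence turns the identities $\Phi^{-1}\circ\Phi=\mathrm{id}$ and $\Phi\circ\Phi^{-1}=\mathrm{id}$ into $uv=1_A$ and $vu=1_{A'}$; concretely, evaluating the relevant composite natural transformation on an identity morphism reads off these equations. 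Hence $u$ is invertible with $u^{-1}=v$, and both $\|u\|\leq 1$ and $\|u^{-1}\|\leq 1$.

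Finally, Corollary~\ref{unitaries} applies directly to $u$ and yields that $u$ is unitary. For uniqueness I would note that $\Phi=(\alpha')^{-1}\circ\alpha$ is completely determined by the representing data, so its Yoneda counterpart is the unique morphism satisfying the compatibility condition $\alpha=\alpha'\circ\cat{C}(u,-)$, and we have just shown this $u$ is unitary. The main obstacle is conceptual rather than computational: one must observe that the isometric hypothesis forces both $u$ and its inverse into the unit ball simultaneously, since dropping it would still give an isomorphism of representing objects but no reason for it to be unitary. Everything else is bookkeeping with the Yoneda bijection.
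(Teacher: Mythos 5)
Your proposal is correct and follows essentially the same route as the paper: compose the two representing isomorphisms, apply the weak $\Ban$-enriched Yoneda lemma to obtain an isomorphism whose components (and whose inverse's components) are contractions, and conclude via Corollary~\ref{unitaries}. The paper's own proof is just a compressed version of your argument, so no further comparison is needed.
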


\begin{proof}
	If both $A$ and $B$ represent $F$ via $\cat{C}(A,-)\cong F\cong \cat{C}(B,-)$, then the enriched Yoneda lemma gives us an isomorphism $u : A \to B$. Since the natural isomorphism between the hom-functors is required to be a componentwise isometry by $\Ban$-enrichment, we necessarily have $\| u \| = \| u^{-1} \| = 1$, making $u$ into a unitary by Corollary~\ref{unitaries}.
\end{proof}

This uniqueness up to unique unitary is a desideratum of a good notion of universal property for C$^*$-categories, and more generally for all dagger categories~\cite{daglims}. Corollary~\ref{unitaries} tells us that the compatibility with the dagger structure is automatic thanks to the enrichment in $\Ban$, so that we do not need to formulate our universal properties in the form of dagger limits as in~\cite{daglims}.

\section{The universal property of infinite direct sums}
\label{sec_directsum_def}

We now turn to the study of infinite direct sums by defining them in terms of a universal property as in Definition~\ref{def_univ}, and then proving that the existing notions of direct sum in the category of Hilbert spaces and more generally all W$^*$-categories indeed satisfy this universal property. While our considerations also apply to finite direct sums as a special case, in this case our discussion provides nothing new, since then our universal property reduces to the standard one of biproducts in an additive category.

We again let $\cat{C}$ be a C$^*$-category. Let $I$ be a finite or infinite index set. For a family $(A_i)_{i\in I}$ of objects in $\cat{C}$, we consider the functor
\[
	\bigoplus_i \cat{C}(A_i,-) \colon \  \cat{C} \to \Ban
\]
given by associating to every object $B$ the set
\beq
\label{AiB}
	\bigoplus_i \cat{C}(A_i , B) \ := \ \Bigl\{ (f_i : A_i \to B)_{i\in I} \Bigm\vert\textstyle  \sum_i f_i f_i^* < \infty \Bigr\}.
\eeq
Here, $\sum_i f_i f_i^*$ can be interpreted as converging in the ultraweak topology\footnote{For positive elements~$(x_i)_{i \in I}$ of a W$^*$-algebra, convergence and value of the sum~$\sum_{i \in I} x_i$ coincides in the ultraweak and ultrastrong topology and, in fact, equals the supremum of the partial sums.} in the case where $\cat{C}(B,B)$ is a W$^*$-algebra. If this is not the case, what we mean by the condition $\sum_i f_i f_i^* < \infty$ is that there is a fixed element which upper bounds all partial sums $\sum_{i\in S} f_i f_i^*$ for finite $S\subseteq I$, or equivalently that the sum converges ultraweakly in the double dual W$^*$-algebra $\cat{C}(B,B)^{**}$.

The set~\eqref{AiB} is a vector space under componentwise addition and scalar multiplication, where closure under addition follows from $f_i g^*_i + g_i f^*_i \leq f_i f^*_i + g_i g^*_i$, which holds because of $(f_i - g_i) (f^*_i - g^*_i) \geq 0$. We furthermore consider this vector space as a normed space under the norm
\[
	\| (f_i)_{i \in I} \| \ :=\  \Bigl\| \sum_i f_i f_i^* \Bigr\|^{1/2},
\]
where we interpret the sum as before either in terms of a supremum over the norm of finite partial sums, or as converging in the double dual $\cat{C}(B,B)^{**}$.
(This norm is in fact the one associated to the~$B$-valued inner
        product~$\langle (f_i)_i, (g_i)_i \rangle  :=
                \sum_i f_ig_i^*$,
                i.e.~$\| (f_i)_i \|^2 = \|\langle (f_i)_i, (f_i)_i \rangle\|$.
        The proof of the triangle inequality
                is similar to that for a
            norm associated to a regular inner product,
            using a generalized Cauchy--Schwarz inequality.
            See for instance~\cite[\S142\textsubscript{III--V}]{bas}
                or~\cite[Prop.~2.3]{paschke}.)

\begin{lem}
	Let $\cat{C}$ be a C$^*$-category, $(A_i)_{i\in I}\in\cat{C}$ a family of objects and $B\in\cat{C}$. Then $\bigoplus_i \cat{C}(A_i,B)$ is complete, and therefore a Banach space.
\label{lem_complete}
\end{lem}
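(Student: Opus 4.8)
The plan is to take an arbitrary Cauchy sequence $(f^{(n)})_n$ in $\bigoplus_i \cat{C}(A_i,B)$, with $f^{(n)} = (f^{(n)}_i)_{i\in I}$, build a candidate limit by passing to componentwise limits, and then verify both that this candidate lies in the space and that the convergence takes place in the given norm. The tool used throughout is the monotonicity of the partial sums: since each summand $g_i g_i^*$ is positive, $\sum_{i\in S} g_ig_i^* \leq \sum_{i\in S'} g_i g_i^*$ whenever $S \subseteq S'$, so that $\| \sum_{i\in S} g_i g_i^* \| \leq \|(g_i)_i\|^2$ for every finite $S$, and in particular $\|g_i\| \leq \|(g_i)_i\|$ for each single index $i$.

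Applying this single-index estimate to differences gives $\|f^{(n)}_i - f^{(m)}_i\| \leq \|f^{(n)} - f^{(m)}\|$, so for each $i$ the sequence $(f^{(n)}_i)_n$ is Cauchy in the Banach space $\cat{C}(A_i,B)$ and converges to some $f_i := \lim_n f^{(n)}_i$. I take $(f_i)_i$ as the candidate limit. To see it lies in $\bigoplus_i \cat{C}(A_i, B)$, note that $(f^{(n)})_n$ is bounded, say by $M$, and that for each finite $S$ the map $(g_i)_i \mapsto \sum_{i\in S} g_i g_i^*$ is norm-continuous, by submultiplicativity of the norm together with $\|x^*\| = \|x\|$. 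Hence $\sum_{i\in S} f_i f_i^* = \lim_n \sum_{i\in S} f^{(n)}_i (f^{(n)}_i)^*$ in norm, so $\|\sum_{i\in S} f_i f_i^*\| \leq M^2$ uniformly in $S$; the increasing net of partial sums is therefore bounded, giving $\sum_i f_i f_i^* < \infty$ and $\|(f_i)_i\| \leq M$.

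For the convergence, fix $\eps > 0$ and choose $N$ with $\|f^{(n)} - f^{(m)}\| < \eps$ for $n,m \geq N$. Fixing a finite $S$ and $n \geq N$ and letting $m \to \infty$ --- again using norm-continuity of the finite partial-sum map on the finitely many differences $f^{(n)}_i - f^{(m)}_i \to f^{(n)}_i - f_i$ --- yields $\|\sum_{i\in S} (f^{(n)}_i - f_i)(f^{(n)}_i - f_i)^*\| \leq \eps^2$. Taking the supremum over finite $S$ gives $\|f^{(n)} - (f_i)_i\| \leq \eps$ for all $n \geq N$, which completes the argument.

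The main obstacle is the interchange of limits in the two places where $f^{(m)}_i$ is replaced by its limit $f_i$ inside a partial sum; this is legitimate precisely because $S$ is finite, so only a norm limit of finitely many terms is involved and the delicate infinite summation never participates in that step. The infinite sum enters only through the supremum characterization of the norm, which converts the uniform finite-$S$ estimates into the desired global bounds; here one relies on a norm-bounded increasing net of positive elements admitting a supremum in $\cat{C}(B,B)$, or in $\cat{C}(B,B)^{**}$, which is the single point where the ambient W$^*$-structure is invoked.
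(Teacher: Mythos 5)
Your proof is correct and follows essentially the same route as the paper's: componentwise limits via the boundedness of the coordinate projections, a uniform bound on finite partial sums passed to the limit to show membership, and the same finite-$S$-then-$m\to\infty$-then-sup argument for convergence. The only difference is that you spell out the justifications (monotonicity of positive partial sums, norm-continuity of the finite partial-sum map) slightly more explicitly than the paper does.
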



\begin{proof}
Suppose that we are given a Cauchy sequence of families $(f^{(n)})_{n\in\N} = ( ( f_i^{(n)} )_{i\in I} )_{n\in\N}$. For given $j$, the projection map
\[
	\bigoplus_i \cat{C}(A_i,B) \longrightarrow \cat{C}(A_j,B)	
\]
is bounded, and therefore $( f_j^{(n)} )_{n\in\N}$ is Cauchy as well; let us denote the limit by $f_j : A_j \to B$. We will now show that $f \in \bigoplus_i \cat{C}(A_i,B)$ for $f := (f_j)_{j\in I}$, which amounts to verifying the relevant square summability. The Cauchy assumption implies that the sequence $(f^{(n)})_{n\in\N}$ is uniformly bounded by some constant $C$, i.e.
\[
    \| f^{(n)} \| \ = \ 
	\Bigl\| \textstyle \sum_i f_i^{(n)} (f_i^{(n)})^* \Bigr\|^{1/2} \ \leq\  C.
\]
Restricting to a finite partial sum over $S\subseteq I$ and taking $n\to\infty$ shows that
\[
	\Bigl\| \textstyle \sum_{i\in S} f_i f_i^* \Bigr\|^{1/2} \ \leq\  C
\]
as well. Since $S$ was arbitrary, we conclude that $\| f \| \leq C$.

    The convergence $f^{(n)} \to f$ works similarly. By assumption, for every~$\eps > 0$ we have~$\left\| f^{(n)} - f^{(m)} \right\| \leq \eps$ for sufficiently large $n,m\in\N$. In particular, we know that for any finite $S \subseteq I$,
\[
	\Bigl\| \sum_{i\in S} \bigl(f_i^{(n)} - f_i^{(m)}\bigr) \bigl(f_i^{(n)} - f_i^{(m)}\bigr)^* \Bigr\|^{1/2} \ \leq\  \eps
\]
for sufficiently large $n,m\in\N$. Taking $m\to\infty$ gives
\[
	\Bigl\| \sum_{i\in S} \bigl(f_i^{(n)} - f_i\bigr) \bigl(f_i^{(n)} - f_i\bigr)^* \Bigr\|^{1/2} \ \leq \ \eps.
\]
Since $S$ was arbitrary, we conclude $\| f^{(n)} - f \|  \leq \eps$ for sufficiently large $n$.
\end{proof}

The map $B \mapsto \bigoplus_i \cat{C}(A_i,B)$ is functorial in $B$ in the obvious way: for $g : B \to C$, a family $(f_i : A_i \to B)_{i\in I}$ gets mapped to $(gf_i : A_i \to C)_{i\in I}$, which satisfies the relevant square integrability condition because of
\[
	\sum_i (g f_i)(g f_i)^* = g \left( \sum_i f_i f_i^* \right) g^* < \infty.
\]
Thus we get a functor
\[
	\bigoplus_i \cat{C}(A_i,-)  \colon \  \cat{C} \to \Ban.
\]
We now get to our main definition:

\begin{defn}
\label{directsumdef}
Let $\cat{C}$ be a C$^*$-category. An \emph{$I$-indexed direct sum} of the family of objects $(A_i)_{i\in I}$ is an object $\bigoplus_i A_i$ which represents the $\Ban$-enriched functor $\bigoplus_i \cat{C}(A_i,-)$.
\end{defn}

By Corollary~\ref{unique_up_to_unitary}, this characterizes infinite direct sums up to unique unitaries. Since $\cat{C}$ is a dagger category, the same holds dually, meaning that $\bigoplus_i A_i$ also represents the functor
\[
	\bigoplus_i \cat{C}(-,A_i) \: : \: \cat{C}^\op \to \Ban
\]
given by associating to every object $B$ the Banach space
\[
	\bigoplus_i \cat{C}(B, A_i) \ := \ 
    \Bigl\{\, \textstyle (f_i : B \to A_i)_{i\in I} \Bigm\vert \sum_i f_i^* f_i < \infty \,\Bigr\},
\]
where the sum is, as before, taken in the ultraweak topology of $\cat{C}(B,B)^{**}$ or to be understood in terms of uniform bounds on finite partial sums, and again considered a Banach space with respect to the norm
\[
	\| (f_i)_{i \in I} \|\  := \ \Bigl\| \,\sum_i f_i^* f_i \,\Bigr\|^{1/2}.
\]
This Banach space is contravariantly functorial in $B$ in the obvious way.

\begin{rem}
For finite $I$, our direct sums are also dagger limits in the sense of~\cite{daglims}. For infinite $I$, this is no longer the case, since infinite direct sums are not even limits; compare the degeneracy result~\cite[Theorem~5.2]{daglims}, which indicates that infinite dagger products cannot be expected to exist. In this sense, our notion of universal property via Definition~\ref{directsumdef} seems better adapted to the category of Hilbert spaces or other W$^*$-categories than that of~\cite{daglims}.
\end{rem}

\begin{rem}
	One may wonder whether our direct sums can be obtained as \emph{weighted limits} or \emph{weighted colimits}, which are generally regarded as the adequate notion of limits and colimits in an enriched context like ours. We believe this not to be the case: using enrichment in $\Ban$, there does not seem to be any weight which would give rise to the functor $\bigoplus_i \cat{C}(A_i,-)$ from~\eqref{AiB}. The main reason is that mere $\Ban$-enrichment does not have access to the involution, which however is a crucial piece of structure in~\eqref{AiB}.
\end{rem}

\begin{rem}
	We could also consider \emph{direct sums with kernel} in the following sense. Let $K = (K_{ij}) \in \C^{I\times I}$ be a positive semidefinite matrix. Then given a family of objects $(A_i)_{i\in I}$ as before, we can consider the functor
	\[
		\cat{C}\longrightarrow \Ban, \qquad B\ \longmapsto\ {\bigoplus_i}^K \cat{C}(A_i, B)
        \ := \ \Bigl\{\,  \textstyle (f_i : A_i \to B)_{i\in I} \Bigm\vert \sum_{i,j} K_{ij} f_i f_j^* < \infty \,\Bigr\},
	\]
	where the completeness proof with respect to the norm $f\mapsto \| \sum K_{ij} f_i f_j^* \|^{1/2}$ works in the same way as the proof of Lemma~\ref{lem_complete}.

	We can thus define the \emph{direct sum with kernel $K$} to be an object representing this functor in the same way as above. For example in $\Hilb$ with $A_i = \C$ for all $i\in I$, the resulting direct sum would be the space of families $(\xi_i)_{i\in I}$ with $\xi_i\in\C$ that are $K$-square summable in that $\sum_{i,j} K_{ij} \bar{\xi}_i \xi_j < \infty$, turned into a Hilbert space with respect to the inner product with kernel $K$. We do not yet know what these direct sums with kernel will amount to in other cases.
\end{rem}

\section{Direct sums in W$^*$-categories}
\label{wstarcatcase}

In a W$^*$-category, we characterize the existence of direct sums as follows:

\begin{thm}
\label{directsum_equiv}
Let $\cat{C}$ be a W$^*$-category and $(A_i)_{i\in I}$ a family of objects in $\cat{C}$. Then the following pieces of structure are equivalent for an object $A$:
\begin{enumerate}
\item\label{universalprop} $A$ is an $I$-indexed direct sum $\bigoplus_i A_i$.
\item\label{universalprop2} 
There is a family of
        morphisms~$\left(\kappa_j : A_j \to A\right)_{j\in I}$
        with~$\sum_{j \in I} \kappa_j \kappa_j^* < \infty$
        such that for any object~$B$
        and family of
        morphisms~$\left(f_j: A_j \to B\right)_{j\in I}$
        with~$\sum_{j \in I}f_j f_j^* < \infty$,
        there is a unique~$f\colon A \to B$
        such that $f \kappa_j = f_j$ for all~$j \in I$.
Moreover, such an~$f$ satisfies~$\| f \|^2 = \| \sum_{j\in I} f_j f_j^* \|$.
\item\label{biprodsum} There is a family of morphisms $\left(\kappa_j : A_j \to A\right)_{j\in I}$ such that $\kappa_i^*\kappa_j = \delta_{ij}$, and
\beq
\label{complete}
        \sum_{j\in I} \kappa_j \kappa_j^* \ =\  1_A.
\eeq
\end{enumerate}
\end{thm}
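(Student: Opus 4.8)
The plan is to establish the two biconditionals $(\ref{universalprop})\Leftrightarrow(\ref{universalprop2})$ and $(\ref{universalprop2})\Leftrightarrow(\ref{biprodsum})$, which together yield the full equivalence. The first of these is a routine unwinding via Yoneda, the content lies in the second.

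First I would treat $(\ref{universalprop})\Leftrightarrow(\ref{universalprop2})$. By the enriched Yoneda lemma~\cite{kelly}, a natural transformation $\cat{C}(A,-)\Rightarrow\bigoplus_i\cat{C}(A_i,-)$ corresponds to an element of $\bigoplus_i\cat{C}(A_i,A)$, that is, to a family $(\kappa_j:A_j\to A)_j$ with $\sum_j\kappa_j\kappa_j^*<\infty$, and at an object $B$ it acts by $f\mapsto(f\kappa_j)_j$. This lands in $\bigoplus_i\cat{C}(A_i,B)$ because $\sum_j(f\kappa_j)(f\kappa_j)^*=f\bigl(\sum_j\kappa_j\kappa_j^*\bigr)f^*<\infty$. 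Representability in the sense of Definition~\ref{def_univ} asks each component to be an isometric bijection: bijectivity is precisely the existence-and-uniqueness of $f$ with $f\kappa_j=f_j$, and the componentwise isometry reads $\|f\|^2=\|\sum_j f_jf_j^*\|$. This is verbatim condition~\ref{universalprop2}, so both directions follow by transcription.

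Next, for $(\ref{universalprop2})\Rightarrow(\ref{biprodsum})$ I would prove completeness and orthonormality separately. For completeness, set $p:=\sum_j\kappa_j\kappa_j^*\in\cat{C}(A,A)$. Any $f:A\to B$ is the unique preimage of the family $(f\kappa_j)_j$, so the norm clause of~\ref{universalprop2} gives $\|f\|^2=\|fpf^*\|$ for all such $f$. Restricting to endomorphisms: $f=1_A$ gives $\|p\|=1$, whence $0\le p\le 1_A$, and $f=q$ a nonzero projection gives $\|qpq\|=1$. If $p\neq 1_A$, a spectral projection $q$ of $p$ (available since $\cat{C}(A,A)$ is a W$^*$-algebra) on whose range $p\le 1-\delta$ yields $\|qpq\|\le 1-\delta<1$, a contradiction; hence $p=1_A$, and in particular $\kappa_i\kappa_i^*\le 1$, so $\|\kappa_i\|\le 1$. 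For orthonormality, fix $i$ and apply~\ref{universalprop2} to $B=A_i$ with the family $(\delta_{ij}1_{A_i})_j$, producing $p_i:A\to A_i$ with $p_i\kappa_j=\delta_{ij}1_{A_i}$ and $\|p_i\|=1$. Then $p_i\kappa_i=1_{A_i}$ with $\|p_i\|\le 1$ and $\|\kappa_i\|\le 1$, so Lemma~\ref{isometrylemma} forces $p_i=\kappa_i^*$ and makes $\kappa_i$ an isometry; consequently $\kappa_i^*\kappa_j=p_i\kappa_j=\delta_{ij}$.

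Finally, for $(\ref{biprodsum})\Rightarrow(\ref{universalprop2})$ the relation $\sum_j\kappa_j\kappa_j^*=1_A<\infty$ is immediate. Given $(f_j)$ with $\sum_j f_jf_j^*=:M<\infty$, I would put $f_S:=\sum_{j\in S}f_j\kappa_j^*$ for finite $S$; orthonormality gives $\|f_S\|^2=\|\sum_{j\in S}f_jf_j^*\|\le M$, so the net $(f_S)$ is bounded in the dual Banach space $\cat{C}(A,B)$, hence has an ultraweakly convergent subnet (Banach--Alaoglu) with limit $f$. Since $f_S\kappa_i=f_i$ whenever $S\ni i$ and composition is separately ultraweakly continuous in a W$^*$-category~\cite{wstarcat}, one gets $f\kappa_i=f_i$. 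Uniqueness and the norm identity are then formal: if $(f-f')\kappa_j=0$ for all $j$ then $(f-f')=(f-f')\sum_j\kappa_j\kappa_j^*=0$ by ultraweak continuity, and $ff^*=f\bigl(\sum_j\kappa_j\kappa_j^*\bigr)f^*=\sum_j f_jf_j^*$ gives $\|f\|^2=\|\sum_j f_jf_j^*\|$. The main obstacle throughout is exactly this existence step: the partial sums $f_S$ need not be norm-Cauchy, because tails of $\sum_j f_jf_j^*$ need not be norm-small, so the construction cannot be carried out in a mere C$^*$-category and genuinely requires the W$^*$-structure (the predual and normality of composition); the spectral-projection argument in the completeness step relies on the W$^*$-hypothesis for the same reason.
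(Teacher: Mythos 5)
Your proof is correct, and while your overall decomposition (the Yoneda unwinding of \ref{universalprop}$\Leftrightarrow$\ref{universalprop2}, and the use of Lemma~\ref{isometrylemma} to force $\pi_i=\kappa_i^*$ and hence $\kappa_i^*\kappa_j=\delta_{ij}$) coincides with the paper's, you take a genuinely different route at the two places where the W$^*$-structure does real work. For the completeness relation in \ref{universalprop2}$\Rightarrow$\ref{biprodsum}, the paper first establishes orthonormality, notes that the $\kappa_j\kappa_j^*$ are then mutually orthogonal projections, and kills any projection orthogonal to all of them via the uniqueness clause applied to the zero family; you instead extract $\|q\|^2=\|qpq\|$ for every projection $q$ from the norm clause and rule out $p=\sum_j\kappa_j\kappa_j^*\neq 1_A$ by a nonzero spectral projection of $p$ on which $p\leq 1-\delta$ --- both work, and yours has the minor virtue of not needing orthonormality first. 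The more substantial divergence is in \ref{biprodsum}$\Rightarrow$\ref{universalprop2}: the paper constructs $f$ by showing that the partial sums $\sum_{j\in S}\kappa_jf_j^*$ form an ultranorm-Cauchy net in the self-dual Hilbert $\cat{C}(B,B)$-module $\cat{C}(B,A)$ and taking the ultranorm limit, whereas you observe that the net $f_S=\sum_{j\in S}f_j\kappa_j^*$ is norm-bounded and extract a weak$^*$ cluster point by Banach--Alaoglu, then verify $f\kappa_i=f_i$ using the separate weak$^*$-continuity of composition in a W$^*$-category. Your argument avoids the Hilbert-module and s-topology machinery entirely, needing only the predual and normality of composition, at the cost of producing $f$ as a cluster point rather than a limit --- which is harmless, since uniqueness is then proved by the same computation $f'=f'\sum_j\kappa_j\kappa_j^*=\sum_j f_j\kappa_j^*$ as in the paper, and the norm identity falls out of $ff^*=\sum_j f_jf_j^*$. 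Your closing observation that the $f_S$ need not be norm-Cauchy, so that the existence step genuinely requires the W$^*$-hypothesis, is exactly the point the paper's ultranorm argument is designed around.
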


As far as we know, the first definition of infinite direct sums in W$^*$-categories has been given in~\cite[p.~100]{wstarcat} in the form~\ref{biprodsum}. From our perspective, it is preferable to replace this definition by a universal property as in Definition~\ref{directsumdef}, and to regard the original definition of Ghez, Lima and Roberts as an alternative characterization instead.

\begin{proof}
\ref{universalprop2} is a simple reformulation of the universal property \ref{universalprop} via the Yoneda lemma, as follows. Given a natural isometric isomorphism $\cat{C}(A,B) \cong \bigoplus_i \cat{C}(A_i,B)$, we define the family $(\kappa_i)$ to be the image of $\id{A}$. Then for any $f : A \to B$, the naturality diagram
\[\begin{tikzcd}
	\cat{C}(A,A) \ar{r}{\cong} \ar{d}{f\circ -} & \bigoplus_i \cat{C}(A_i,A) \ar{d}{(f\circ -)} \\
	\cat{C}(A,B) \ar{r}{\cong} & \bigoplus_i \cat{C}(A_i,B)
\end{tikzcd}\]
shows that the universal property takes $f$ to the family $(f_j)
:= (f\kappa_j)$. By the definition of the norm on $\bigoplus_i
\cat{C}(A_i,B)$ and the assumption that the universal property is
an isometric isomorphism, we indeed have $\| f \|^2 = \| \sum_j
f_j f_j^* \|$, which also gives $\sum_j \kappa_j \kappa_j^*
< \infty$ in the case $f = \id{A}$. Conversely, it is clear that
any family~$(\kappa_j)$ as in~\ref{universalprop2} gives an isometric
isomorphism~$\cat{C}(A,B) \cong \bigoplus_i \cat{C}(A_i,B)$ that
is natural in $B$.

We now derive \ref{biprodsum} from \ref{universalprop2}. Applying the assumption on norms with $(f_j) := (\kappa_j)$ gives $\| \sum_i \kappa_i \kappa_i^* \| = \| \id{A} \|$, and therefore $\| \kappa_j \| \leq 1$ for all $j\in I$. For fixed $j$, the family $(\delta_{ij} : A_i \to A_j)_{i\in I}$ corresponds to some morphism $\pi_j : A \to A_j$ such that $\pi_j\circ\kappa_i = \delta_{ij}$. Again by the isometry property, we have $\| \pi_j \|^2 = \| \sum_i \delta_{ij} \delta_{ij}^* \| = \| \id{A_j} \| \leq 1$. By Lemma~\ref{isometrylemma}, this implies $\pi_j = \kappa_j^*$. We therefore have $\kappa_j^*\kappa_i = \delta_{ij}$, as desired. This also implies that the $\kappa_j \kappa_j^*$ are mutually orthogonal projections. In order to also prove the completeness relation~\eqref{complete}, let $p : A\to A$ be any other projection that is orthogonal to each of the $\kappa_j \kappa_j^*$; we need to show $p = 0$. Again by naturality, under the isomorphism $\cat{C}(A,A) \cong \bigoplus_i \cat{C}(A_i,A)$, the new $p$ corresponds to the family $(p\kappa_i) = (0)$, where $p\kappa_i = 0$ follows from the assumed orthogonality. This implies indeed $p=0$ by the uniqueness part of the universal property.

We finally show that \ref{biprodsum} implies \ref{universalprop2}. So let~$\left(f_j: A_j \to B\right)_{j\in I}$ be any family of morphisms
        with~$\sum_{j \in I}f_jf_j^* < \infty$.
The homset~$C(B,A)$
    is a self-dual Hilbert~$\cat{C}(B,B)$-module
    with~$\cat{C}(B,B)$-valued inner
    product~$\langle g, f\rangle \equiv g^*f$~\cite[Proposition~2.15]{wstarcat}.
The self duality is equivalent to the fact
    that~$\cat{C}(B,A)$ is complete in the \emph{ultranorm topology};
    this is the topology
    generated by the seminorms~$\| f \|_\omega \equiv \omega(\langle f,f\rangle)^\frac{1}{2}$
    indexed by normal states~$\omega$ on~$\cat{C}(B,B)$~\cite[\S149\textsubscript{V}]{bas}, also known as the \emph{s-topology}~\cite[p.~88]{wstarcat}. We now turn to the construction of $f : A \to B$.
    Pick any normal state~$\omega$ on~$\cat{C}(B,B)$
        and finite subset~$S \subseteq I$.  We have
\begin{alignat*}{2}
    \bigl\| \sum_{j \in S} \kappa_j f_j^* \bigr\|^2_\omega
    &\ \equiv \ 
    \omega\Bigl(\bigl(\sum_{i \in S} \kappa_i f_i^*\bigr)^*
            \bigl(\sum_{j \in S} \kappa_j f_j^*\bigr)\Bigr) \\
    &\ =\ 
    \omega\Bigl( \sum_{j \in S} f_j f_j^*
    \Bigr) &\qquad&\text{as~$\kappa_j^*\kappa_i = \delta_{ij}$}.
\end{alignat*}
From this and~$\sum_{j \in I} f_jf_j^* < \infty$,
    it follows that the net~$(\sum_{j \in S} \kappa_j f_j^*)_S$
    is ultranorm Cauchy as $S$ ranges over all finite subsets of $I$.
    Define~$f := ( \sum_{j \in I} \kappa_jf_j^* )^*$,
        where the sum is understood to converge in the ultranorm topology.
The assignment~$g \mapsto \kappa_j^*g$
    yields a bounded~$\cat{C}(B,B)$-linear map between
    the Hilbert~$\cat{C}(B,B)$-modules~$\cat{C}(B,A)$ and~$\cat{C}(B,A_j)$
    and is therefore ultranorm continuous~\cite[\S148]{bas}.
    Hence
        $\kappa_j^* f^*
             =  \kappa_j^* \sum_{i} \kappa_i f_i^*
             =  \sum_i \kappa_{j }^*\kappa_i f_i^*
            =  f_j^*$
            and so~$f \kappa_j = f_j$ as desired.
Next we will show that~$\|f \|^2 = \|\sum_j f_j f_j^*\|$.
    By definition of~$f^*$ and \cite[\S148\textsubscript{V}]{bas}
         we have~
 \begin{equation*}
         \langle f^*, f^*\rangle
         \ =\  \uwlim_{S\subseteq I\text{ finite}} \, \Bigl\langle \sum_{j \in S} \kappa_j f^*_j,
                        \sum_{j \in S}\kappa_j f^*_j \Bigr\rangle
         \ =\  \uwlim_{S\subseteq I\text{ finite}} \, \sum_{j \in S} f_j f_j^*
        \ = \ \sum_j f_j f_j^*
 \end{equation*}
and so~$\| f \|^2 = \|f^*\|^2 = \|\langle f,f \rangle \|
                = \|\sum_j f_j f_j^* \|$ as desired.
Finally, to show uniqueness,
    assume~$f'\colon A \to B$ is any (other)
    morphism with~$f' \kappa_i = f_i$.
    Note that~$\sum_j \kappa_j \kappa_j^*$
       converges ultrastrongly
       and so in the ultranorm topology as well.
With similar reasoning as before,
        the composition map~$g \mapsto f' g$ is ultranorm continuous,
        hence
\begin{equation*}
    f'
    \ =\ f' \sum_{j \in I} \kappa_j \kappa_j^*
    \ =\ \sum_{j \in I} f' \kappa_j \kappa_j^*
    \ =\ \sum_{j \in I} f_j \kappa_j^*
    \ =\ \sum_{j \in I} f \kappa_j \kappa_j^*
    \ =\ f,
\end{equation*}
    where the sums converge in the ultranorm topology.
\end{proof}

It is now straightforward to note:

\begin{cor}
Every normal $*$-functor between W$^*$-categories preserves direct sums.
\end{cor}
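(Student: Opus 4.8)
The plan is to reduce everything to the purely equational characterization \ref{biprodsum} of Theorem~\ref{directsum_equiv}, which has the advantage of being manifestly functorial and free of any reference to the topology-laden representing functor $\bigoplus_i \cat{C}(A_i,-)$. Let $F \colon \cat{C} \to \cat{D}$ be a normal $*$-functor, and suppose that $A = \bigoplus_i A_i$ in $\cat{C}$. By the equivalence \ref{universalprop}$\Leftrightarrow$\ref{biprodsum}, this amounts to a family of coprojections $\kappa_j \colon A_j \to A$ with $\kappa_i^*\kappa_j = \delta_{ij}$ and $\sum_{j} \kappa_j \kappa_j^* = 1_A$. I would then show that the family $(F\kappa_j)_{j\in I}$ exhibits $F(A)$ as a direct sum $\bigoplus_i F(A_i)$ in $\cat{D}$, once more via \ref{biprodsum}. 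Uniqueness up to unique unitary (Corollary~\ref{unique_up_to_unitary}) then yields the assertion that direct sums are preserved.

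First I would verify the orthonormality relations. Since $F$ is a linear $*$-functor it preserves composition, identities, the zero morphism and the involution, so
\[
    (F\kappa_i)^* (F\kappa_j) \ = \ F(\kappa_i^*)\,F(\kappa_j) \ = \ F(\kappa_i^*\kappa_j) \ = \ \delta_{ij}\, 1_{F(A_j)},
\]
where for $i=j$ we use $F(1_{A_j}) = 1_{F(A_j)}$ and for $i\neq j$ we use $F(0)=0$. This step is entirely formal and requires only that $F$ be a $*$-functor.

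The crux is the completeness relation $\sum_j (F\kappa_j)(F\kappa_j)^* = 1_{F(A)}$, and this is exactly where normality enters. The $\kappa_j\kappa_j^*$ are mutually orthogonal projections, so the finite partial sums $\sum_{j\in S}\kappa_j\kappa_j^*$ form an increasing net of projections in the W$^*$-algebra $\cat{C}(A,A)$ whose supremum, equivalently ultraweak limit, is $1_A$. Restricted to endomorphisms, $F$ is a unital $*$-homomorphism $\cat{C}(A,A)\to\cat{D}(F(A),F(A))$, and normality of $F$ means precisely that this induced map is ultraweakly continuous, hence preserves suprema of bounded increasing nets of positive elements. Since the $F\kappa_j$ are again mutually orthogonal (by the orthonormality just established), I obtain
\[
    \sum_{j} (F\kappa_j)(F\kappa_j)^* \ = \ \sup_{S}\, F\Bigl(\sum_{j\in S}\kappa_j\kappa_j^*\Bigr) \ = \ F\Bigl(\sup_S \sum_{j\in S}\kappa_j\kappa_j^*\Bigr) \ = \ F(1_A) \ = \ 1_{F(A)},
\]
the outer sum being interpreted ultraweakly as in \ref{biprodsum}. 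With both relations in hand for $(F\kappa_j)$, Theorem~\ref{directsum_equiv} identifies $F(A)$ as $\bigoplus_i F(A_i)$.

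I expect the only genuine subtlety to be confirming that the definition of \emph{normal} $*$-functor, as in~\cite{wstarcat}, delivers precisely the ultraweak continuity of the induced maps on endomorphism algebras that the completeness relation needs; everything else is bookkeeping with the $*$-functor axioms. It is worth emphasising the division of labour: the orthonormality relations hold for \emph{every} linear $*$-functor whether or not it is normal, whereas the passage from $\sum_j \kappa_j\kappa_j^* = 1_A$ to its image genuinely uses normality, since this identity is witnessed only by an ultraweak limit rather than a norm limit.
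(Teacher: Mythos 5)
Your proof is correct and is precisely the argument the paper leaves implicit behind ``it is now straightforward to note'': reduce to the equational characterization \ref{biprodsum} of Theorem~\ref{directsum_equiv}, observe that the orthonormality relations are preserved by any $*$-functor, and use normality only to push the ultraweak supremum $\sum_j \kappa_j\kappa_j^* = 1_A$ through $F$. Your closing remark on where normality is genuinely needed matches the intended reading exactly.
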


Rieffel has observed this for the W$^*$-categories $\Rep{A}$ from Example~\ref{rep_ex} as~\cite[Proposition~4.9]{Rieffel1974}. The following was undoubtedly already known to the authors of~\cite{wstarcat}, but we have not found it stated anywhere explicitly. We are now in a position to give a very simple proof.

\begin{prop}
Let $N$ be a W$^*$-algebra. In the W$^*$-category $\NRep{N}$, any $\ell^2$-direct sum
\beq
\label{concrete_directsum}
	\bigoplus_{i\in I} A_i\  :=\  \Bigl\{ \textstyle (\xi_i)_{i\in I} \Bigm \vert \xi_i\in A_i,\: \sum_i \langle\xi_i,\xi_i\rangle < \infty \Bigr\},
\eeq
equipped with the componentwise representation of $N$, is an \emph{$I$-indexed direct sum} in the sense of Definition~\ref{directsumdef}. Conversely, every $I$-indexed direct sum in $\NRep{N}$ is of this form.
\end{prop}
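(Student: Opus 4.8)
The plan is to establish the forward direction by exhibiting the coordinate inclusions and checking condition~\ref{biprodsum} of Theorem~\ref{directsum_equiv}, and then to obtain the converse almost for free from the uniqueness statement Corollary~\ref{unique_up_to_unitary}. Write $A_i = (\H_i,\pi_i)$ for the given objects, so that a morphism in $\NRep{N}$ is a bounded intertwiner and the endomorphism W$^*$-algebra $\cat{C}(A,A)$ is the commutant $\pi(N)'$ inside $\B{\H}$ for $A=(\H,\pi)$. First I would check that the candidate $(\H,\pi)$, with $\H = \bigoplus_i \H_i$ the Hilbert-space $\ell^2$-sum~\eqref{concrete_directsum} and $\pi = \bigoplus_i\pi_i$ the componentwise representation, is genuinely an object of $\NRep{N}$: unitality is immediate, and normality follows from the standard fact that an $\ell^2$-direct sum of normal representations is normal, checked coordinatewise on bounded increasing nets.

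Next I would introduce the coordinate inclusions $\kappa_j\colon\H_j\to\H$, sending $\xi$ to the family supported at $j$ with value $\xi$. These are intertwiners precisely because $\pi$ acts componentwise, and their adjoints $\kappa_j^*$ are the coordinate projections. A direct computation then gives $\kappa_i^*\kappa_j = \delta_{ij}1_{A_j}$, while $\kappa_j\kappa_j^*$ is the orthogonal projection of $\H$ onto its $j$-th summand. The family $(\kappa_j\kappa_j^*)_{j}$ thus consists of mutually orthogonal projections in $\cat{C}(A,A)=\pi(N)'$, and I would argue that $\sum_{j}\kappa_j\kappa_j^* = 1_A$ in the sense required by~\ref{biprodsum}. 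With these data in hand, Theorem~\ref{directsum_equiv} (implication \ref{biprodsum} $\Rightarrow$ \ref{universalprop}) immediately yields that $(\H,\pi)$ is an $I$-indexed direct sum.

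For the converse, I would invoke Corollary~\ref{unique_up_to_unitary}: any two representing objects of the functor $\bigoplus_i\cat{C}(A_i,-)$ are connected by a unique unitary isomorphism. Since a unitary in $\NRep{N}$ is exactly a unitary intertwiner $U\colon A\to\bigoplus_i A_i$, i.e.\ a Hilbert-space unitary commuting with the two $N$-actions, such a $U$ identifies any abstract $I$-indexed direct sum with the concrete $\ell^2$-construction as a normal representation. Hence every $I$-indexed direct sum in $\NRep{N}$ is of the form~\eqref{concrete_directsum}.

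The main obstacle I anticipate is the analytic content hidden in the completeness relation $\sum_j\kappa_j\kappa_j^* = 1_A$ and in the normality claim, since this is where the W$^*$-structure genuinely enters. The sum must be read in the W$^*$-algebra $\pi(N)'$, whose ultraweak topology is the restriction of that of $\B{\H}$; the partial sums $\sum_{j\in S}\kappa_j\kappa_j^*$ form an increasing net of projections, and because $\pi(N)'$ is weakly closed this net converges ultraweakly to its supremum inside $\pi(N)'$. Verifying that this supremum is the full identity $1_\H$ amounts to the density of finitely supported families in the $\ell^2$-sum, and is the one step where the defining square-summability condition and the $\ell^2$-structure are really used.
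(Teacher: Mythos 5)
Your proposal is correct and follows the same route as the paper: verify that the coordinate inclusions satisfy condition~\ref{biprodsum} of Theorem~\ref{directsum_equiv}, and deduce the converse from uniqueness up to unique unitary isomorphism (Corollary~\ref{unique_up_to_unitary}). You simply spell out the details (normality of the componentwise representation, the ultraweak convergence of $\sum_j \kappa_j\kappa_j^*$ to $1_A$ via density of finitely supported families) that the paper leaves as ``straightforward to see.''
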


\begin{proof}
	Given a concrete direct sum as in~\eqref{concrete_directsum}, it is straightforward to see that the inclusion maps $\kappa_i : A_i \to A$ satisy the conditions of Theorem~\ref{directsum_equiv}\ref{biprodsum}. This shows that all direct sums in $\NRep{N}$ exist; the uniqueness up to unique unitaries therefore implies that every direct sum is of this form.
\end{proof}

\bibliographystyle{plain}
\bibliography{direct_integrals}

\begin{thebibliography}{10}

\bibitem{borceux}
Francis Borceux.
\newblock {\em Handbook of categorical algebra. 2}, volume~51 of {\em
  Encyclopedia of Mathematics and its Applications}.
\newblock Cambridge University Press, Cambridge, 1994.
\newblock Categories and structures.

\bibitem{vol4}
I.~M. Gel'fand and N.~Ya. Vilenkin.
\newblock {\em Generalized functions. {V}ol. 4: {A}pplications of harmonic
  analysis}.
\newblock Translated by Amiel Feinstein. Academic Press, New York - London,
  1964, 1964.

\bibitem{wstarcat}
P.~Ghez, R.~Lima, and J.E. Roberts.
\newblock W$^*$-categories.
\newblock {\em Pacific Journal of Mathematics}, 120:79--109, 1985.

\bibitem{daglims}
Chris Heunen and Martti Karvonen.
\newblock Limits in dagger categories.
\newblock {\em Theory \& Applications of Categories}, 34(18):468--513, 2019.
\newblock \href{https://arxiv.org/abs/1803.06651}{arXiv:1803.06651}.

\bibitem{kr}
Richard~V Kadison and John~R Ringrose.
\newblock {\em Fundamentals of the theory of operator algebras}, volume~2.
\newblock American Mathematical Soc., 1997.

\bibitem{kelly}
G.~M. Kelly.
\newblock Basic concepts of enriched category theory.
\newblock {\em Repr. Theory Appl. Categ.}, (10), 2005.
\newblock Reprint of the 1982 original [Cambridge Univ. Press, Cambridge].

\bibitem{paschke}
William~L Paschke.
\newblock Inner product modules over b*-algebras.
\newblock {\em Transactions of the American Mathematical Society},
  182:443--468, 1973.

\bibitem{Rieffel1974}
Marc~A. Rieffel.
\newblock Morita equivalence for {C*}-algebras and {W*}-algebras.
\newblock {\em Journal of Pure and Applied Algebra}, 5(1):51--96, 1974.

\bibitem{sakai}
Sh\^{o}ichir\^{o} Sakai.
\newblock {\em {$C^*$}-algebras and {$W^*$}-algebras}.
\newblock Classics in Mathematics. Springer-Verlag, Berlin, 1998.
\newblock Reprint of the 1971 edition.

\bibitem{bram}
Abraham Westerbaan.
\newblock {\em The Category of Von Neumann Algebras}.
\newblock PhD thesis, Radboud University, 2019.
\newblock \href{https://arxiv.org/abs/1804.02203}{arXiv:1804.02203}.

\bibitem{westerbaan2016universal}
Abraham Westerbaan and Bas Westerbaan.
\newblock A universal property for sequential measurement.
\newblock {\em Journal of Mathematical Physics}, 57(9):092203, 2016.

\bibitem{bas}
Bas Westerbaan.
\newblock {\em Dagger and dilations in the category of von Neumann algebras}.
\newblock PhD thesis, Radboud University, 2019.
\newblock \href{https://arxiv.org/abs/1803.01911}{arXiv:1803.01911}.

\end{thebibliography}

\end{document}